\renewcommand{\epsilon}{\varepsilon}
\theoremstyle{plain}
\newtheorem{thm}{Theorem}[section]
\newtheorem{cor}[thm]{Corollary}
\newtheorem*{theorem*}{Theorem}
\newtheorem*{proposition*}{Proposition}
\theoremstyle{definition}
\theoremstyle{remark}
\def\({\left(}
\def\){\right)}
\begin{document}
\begin{title}
{Sharp lower bounds for the first eigenvalue of Steklov-type eigenvalue problems on a compact surface}
\end{title}
\begin{author}{Gunhee Cho \and Keomkyo Seo}\end{author}

\date{\today}

\maketitle

%\vspace{-1cm}
\begin{abstract}
\noindent Let $\Omega$ be a compact surface with smooth boundary and the geodesic curvature $k_g \ge {c > 0}$ along $\partial \Omega$ for some constant $c \in \mathbb{R}$. We prove that, if the Gaussian curvature satisfies $K \ge -\alpha$ for a constant $\alpha \ge 0$, then the first eigenvalue $\sigma_1$ of the Steklov-type eigenvalue problem satisfies
\[
\sigma_1 + \frac{\alpha}{\sigma_1} \ge c.
\]
Moreover, equality holds if and only if $\Omega$ is a Euclidean disk of radius $\frac{1}{c}$  and $\alpha = 0$. Furthermore, we obtain a sharp lower bound for the first eigenvalue of the fourth-order Steklov-type eigenvalue problem on $\Omega$. \\

\noindent\textbf{Mathematics Subject Classification (2020)}: 58C40, 35P15, 53C20.\\
\textbf{Keywords}: Steklov-type eigenvalue problem, Schr\"{o}dinger-Steklov eigenvalue probem, biharmonic operator.

\end{abstract}

%%%%%%%%%%%%%%%%%%%%%%%%%%%%%%%%%%%%%%%%%%%%%%%%
\section{Introduction}

\noindent Let $(\Omega, g)$ be an $n$-dimensional compact Riemannian manifold with smooth boundary $\partial\Omega$. Consider the following classical Steklov problem:
\[
\begin{cases}
	\Delta u = 0 & \text{in } \Omega,\\
	\frac{\partial u}{\partial \nu} = \sigma\,u & \text{on } \partial\Omega,
\end{cases}
\]
where $\nu$ is the outward unit normal to $\partial\Omega$. It is well known that the spectrum of this problem is discrete and unbounded:
\[
0 = \sigma_0 < \sigma_1 \le \sigma_2 \le \cdots \longrightarrow \infty,
\]
each eigenvalue being repeated according to its multiplicity. In recent years, the Steklov problem has attracted extensive interest (see \cite{AM, DK, Escobar1997, Escobar1999, Escobar2000, FS2011, FS2016, FS2020, LS, PP, XX}, for instance). We refer readers to the surveys by Colbois–Girouard–Gordon–Sher \cite{CGGS} and Girouard–Polterovich \cite{GP} for a summary of recent progress in this area.

For a convex domain $\Omega$ in $\mathbb R^2$, Payne\cite{Payne1970} obtained a sharp lower bound for the first Steklov eigenvalue, which is given by the minimum of the curvature of the boundary of $\Omega$.  More precisely, he proved the following.

\begin{thm}[Payne \cite{Payne1970}] \label{thm: Payne}
Let $\Omega\subset\mathbb R^2$ be a bounded convex domain with smooth boundary $\partial\Omega$. Assume that the geodesic curvature $k_g \ge c>0$ along $\partial\Omega$ for some constant $c\in \mathbb{R}$. Then the first Steklov eigenvalue $\sigma_1$  satisfies
$$\sigma_1 \ge c \, .$$
Moreover, equality holds if and only if $\Omega$ is a disk of radius $\frac{1}{c}$.
\end{thm}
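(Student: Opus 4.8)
The plan is to run a $P$-function maximum-principle argument on a first eigenfunction. Let $u$ be a first Steklov eigenfunction, so $\Delta u=0$ in $\Omega$, $\partial u/\partial\nu=\sigma_1 u$ on $\partial\Omega$, and $u$ is non-constant since $\sigma_1>0$. Set $P:=|\nabla u|^2$. Because the ambient metric is flat and $u$ is harmonic, the Bochner formula gives $\Delta P=2|\hess u|^2\ge 0$, so $P$ is subharmonic and attains its maximum over $\overline{\Omega}$ on $\partial\Omega$. First I treat the generic case in which $P$ is non-constant: choose $x_0\in\partial\Omega$ with $P(x_0)=\max_{\overline{\Omega}}P=:M$, note $M>0$ (otherwise $u$ would be constant), and observe that (i) by Hopf's lemma $\partial P/\partial\nu(x_0)>0$, and (ii) since $P|_{\partial\Omega}$ has a maximum at $x_0$, its first tangential derivative vanishes there and its second tangential derivative is $\le 0$ there.

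The heart of the computation is to rewrite (i) and (ii) through the boundary trace $\phi:=u|_{\partial\Omega}$. Parametrizing $\partial\Omega$ by arclength $s$ (so $\phi'=\langle\nabla u,T\rangle$), using $\Delta u=0$, and using the Frenet relations $T'=-k_g\nu$, $\nu'=k_g T$ along $\partial\Omega$, one obtains $\hess u(T,T)=\phi''+k_g\sigma_1\phi$, $\hess u(\nu,\nu)=-\phi''-k_g\sigma_1\phi$, $\hess u(T,\nu)=(\sigma_1-k_g)\phi'$, and hence
\[
\tfrac12\,\frac{\partial P}{\partial\nu}=-\sigma_1\phi\phi''-k_g\sigma_1^2\phi^2+(\sigma_1-k_g)(\phi')^2,\qquad P|_{\partial\Omega}=\sigma_1^2\phi^2+(\phi')^2 ,
\]
so that $\tfrac{d}{ds}\big(P|_{\partial\Omega}\big)=2\phi'\big(\sigma_1^2\phi+\phi''\big)$. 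Evaluating at $x_0$, condition (ii) forces $\phi'(x_0)=0$ or $\sigma_1^2\phi(x_0)+\phi''(x_0)=0$. In the second case, substituting $\phi''(x_0)=-\sigma_1^2\phi(x_0)$ collapses $\partial P/\partial\nu(x_0)$ to $2(\sigma_1-k_g(x_0))M$, and (i) gives $\sigma_1>k_g(x_0)\ge c$. In the first case, $M=\sigma_1^2\phi(x_0)^2>0$ gives $\phi(x_0)\ne 0$ (say $\phi(x_0)>0$, replacing $u$ by $-u$); then (i) reads $\phi''(x_0)<-k_g(x_0)\sigma_1\phi(x_0)\le -c\,\sigma_1\phi(x_0)<0$, while (ii), i.e. $\phi''(x_0)\big(\sigma_1^2\phi(x_0)+\phi''(x_0)\big)\le 0$ together with $\phi''(x_0)<0$, forces $\phi''(x_0)\ge-\sigma_1^2\phi(x_0)$; chaining the two estimates on $\phi''(x_0)$ yields $\sigma_1>c$. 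Thus whenever $P$ is non-constant, $\sigma_1>c$.

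It remains to handle $P\equiv\text{const}$ and to pin down equality. If $P$ is constant then $0=\Delta P=2|\hess u|^2$, so $u$ is an affine function $u=\langle a,x\rangle+d$ with $a\ne 0$. Differentiating the boundary condition $\langle a,\nu\rangle=\sigma_1(\langle a,x\rangle+d)$ in $s$ gives $k_g\langle a,T\rangle=\sigma_1\langle a,T\rangle$, and $\langle a,T\rangle=\phi'$ is not identically zero (else $u$ would be constant on $\partial\Omega$, hence on $\Omega$), so $k_g\equiv\sigma_1$ on $\partial\Omega$; a closed convex curve of constant geodesic curvature $\sigma_1$ is the circle of radius $1/\sigma_1$, and then $\sigma_1=k_g\ge c$. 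Combining the two cases gives $\sigma_1\ge c$ always, with equality exactly when $P$ is constant and $k_g\equiv c$, i.e. when $\Omega$ is the Euclidean disk of radius $1/c$ (the converse being clear, since that disk has $\sigma_1=k_g=1/c$). The step I expect to be the main obstacle is the subcase at the maximum point where $\nabla u$ is normal to $\partial\Omega$ (so $\phi'(x_0)=0$): there Hopf's lemma alone does not close the estimate and one must additionally invoke the second-order maximality of $P$ along $\partial\Omega$; a secondary nuisance is getting the boundary Hessian identities and the Frenet sign conventions exactly right, since the whole argument rests on them.
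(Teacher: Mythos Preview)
Your proof is correct and follows essentially the same $P$-function/maximum-principle strategy as the paper's proof of its Theorem~\ref{thm: main1} (which specializes to Payne's theorem when $\varphi\equiv 0$ and $K=0$): Bochner gives subharmonicity of $|\nabla u|^2$, the tangential first-derivative condition at a boundary maximum splits into the same two sub-cases, Hopf's lemma handles the case $\phi''=-\sigma_1^2\phi$ directly while Hopf together with the tangential second-order condition handles the case $\phi'=0$, and the constant case forces an affine eigenfunction and hence a disk. One small slip in your final parenthetical: the disk of radius $1/c$ has $\sigma_1=k_g=c$, not $1/c$.
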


\noindent On the other hand, Weinstock\cite{Weinstock} obtained an upper bound for the first Steklov eigenvalue on a simply connected bounded domain $\Omega \subset \mathbb R^2$ as follows:
\[
\sigma_1 L(\partial\Omega) \le 2\pi,
\]
where $L(\partial\Omega)$ denotes the length of $\partial\Omega$. Moreover, equality holds if and only if $\Omega$ is a disk. Applying the Gauss-Bonnet theorem yields
\[
\sigma_1 L(\partial\Omega) \le 2\pi = \int_{\partial\Omega}k_g\,ds \le \left(\max_{\partial \Omega}k_g\right)  L(\partial\Omega),
\]
which implies that
\[
\sigma_1 \le \max_{\partial \Omega}k_g .
\]
 Thus, it follows from Theorem \ref{thm: Payne} that, for any convex planar domain, its first Steklov eigenvalue lies between the maximum and minimum of the geodesic curvature along the boundary.

In~\cite{Escobar1997}, Escobar extended Theorem~\ref{thm: Payne} to two-dimensional Riemannian manifolds. Specifically, he proved the following:

\begin{thm}[Escobar~\cite{Escobar1997}] \label{thm: Escobar}
	Let $(\Omega, g)$ be a compact surface with smooth boundary $\partial\Omega$ and with the Gaussian curvature satisfies $K \ge 0$. Assume that the geodesic curvature satisfies $k_g \ge {{c > 0}}$ along $\partial\Omega$ for some constant $c \in \mathbb{R}$. Then the first Steklov eigenvalue $\sigma_1$ satisfies
	\[
	\sigma_1 \ge c.
	\]
	Moreover, equality holds if and only if $\Omega$ is a Euclidean disk of radius $\frac{1}{c}$.
\end{thm}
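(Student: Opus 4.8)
The plan is to study the \emph{$P$-function} $P:=|\nabla u|^{2}$ of a first eigenfunction $u$ (so $\Delta u=0$ in $\Omega$ and $\partial u/\partial\nu=\sigma_{1}u$ on $\partial\Omega$), using subharmonicity together with the behaviour of $P$ at the point where it is maximal. Since $\dim\Omega=2$ one has $\Ric=K g$, so Bochner's formula and $\Delta u=0$ give $\Delta P=2|\hess u|^{2}+2K|\nabla u|^{2}\ge0$ because $K\ge0$. Hence $P$ is subharmonic, and by the maximum principle it attains its maximum over $\overline\Omega$ at some $x_{0}\in\partial\Omega$ (if this maximum is attained in the interior as well, $P$ is constant and we are in the rigidity case below). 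Also $u$ is non-constant on $\partial\Omega$ — otherwise $u\equiv0$ there, hence in $\Omega$ — so $P\not\equiv0$ and $P(x_{0})>0$.

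Next I would transfer everything to $\partial\Omega$. Parametrizing $\partial\Omega$ by arclength with unit tangent $T$, writing $u'=\partial_{s}u$, and using the Steklov condition $u_{\nu}=\sigma_{1}u$, the structure equations $\nabla_{T}T=-k_{g}\nu$, $\nabla_{T}\nu=k_{g}T$, and $0=\Delta u=\hess u(T,T)+\hess u(\nu,\nu)$, a direct computation gives along $\partial\Omega$
\[
P=\sigma_{1}^{2}u^{2}+(u')^{2},\qquad
\frac{\partial P}{\partial\nu}=2\hess u(\nabla u,\nu)=2\bigl[-\sigma_{1}uu''-k_{g}\sigma_{1}^{2}u^{2}+(\sigma_{1}-k_{g})(u')^{2}\bigr].
\]
Since $x_{0}$ maximizes $P$ over $\overline\Omega$ and over $\partial\Omega$, at $x_{0}$ we have $\partial_{\nu}P\ge0$, $\partial_{s}P=0$ and $\partial_{s}^{2}P\le0$. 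From $\partial_{s}P=2u'(\sigma_{1}^{2}u+u'')$ either $\sigma_{1}^{2}u(x_{0})+u''(x_{0})=0$ or $u'(x_{0})=0$. In the first case, substituting into $\partial_{\nu}P(x_{0})\ge0$ collapses it to $2(\sigma_{1}-k_{g}(x_{0}))P(x_{0})\ge0$, so $\sigma_{1}\ge k_{g}(x_{0})\ge c$. In the second case $u(x_{0})\ne0$ (else $P(x_{0})=0$), and $\partial_{\nu}P(x_{0})\ge0$ reads $-u(x_{0})u''(x_{0})\ge k_{g}(x_{0})\sigma_{1}u(x_{0})^{2}\ge c\sigma_{1}u(x_{0})^{2}>0$, while $\partial_{s}^{2}P(x_{0})=2u''(x_{0})(\sigma_{1}^{2}u(x_{0})+u''(x_{0}))\le0$ forces $|u''(x_{0})|\le\sigma_{1}^{2}|u(x_{0})|$; combining these gives $c\sigma_{1}\le\sigma_{1}^{2}$, i.e.\ $\sigma_{1}\ge c$.

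For rigidity, suppose $\sigma_{1}=c$. Tracing through the two cases shows $\partial_{\nu}P(x_{0})=0$, so the Hopf lemma forces $P$ to be constant; then $0=\Delta P=2|\hess u|^{2}+2K|\nabla u|^{2}$ yields $\hess u\equiv0$ and $K|\nabla u|^{2}\equiv0$, and since $|\nabla u|^{2}=P$ is a positive constant we get $K\equiv0$ and $\nabla u$ parallel. Thus $\Omega$ is flat, and by Gauss--Bonnet it is a topological disk with $\int_{\partial\Omega}k_{g}\,ds=2\pi$; its developing map, injective because the boundary curve is convex with total curvature $2\pi$, identifies $\Omega$ isometrically with a compact convex planar domain on which $u$ is an affine function. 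Normalizing to $u=x$, the Steklov condition becomes $\langle e_{1},\nu\rangle=cx$ on $\partial\Omega$, which, written through the support function $h$ (with $\nu=(\cos\theta,\sin\theta)$), is the ODE $h\cos\theta-h'\sin\theta=\tfrac1c\cos\theta$; its solutions $h=\tfrac1c+A\sin\theta$ are precisely the support functions of disks of radius $1/c$. (Equivalently, once $\Omega$ is identified with a convex planar domain having $\sigma_{1}=c$, one may simply invoke the equality statement of Payne's Theorem~\ref{thm: Payne}.) The converse implication is the classical computation that a Euclidean disk of radius $1/c$ has $\sigma_{1}=c$.

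The main obstacle is essentially the bookkeeping of signs (the geodesic-curvature conventions in $\nabla_{T}T$, $\nabla_{T}\nu$ and in $\partial_{\nu}P$) together with the genuinely separate \emph{degenerate branch} $u'(x_{0})=0$, where the first-order data at the maximum is vacuous and one must instead exploit the second variation $\partial_{s}^{2}P(x_{0})\le0$; in the rigidity part, the one non-formal ingredient is the fact that a compact flat simply connected surface whose boundary has $k_{g}>0$ is isometric to a convex planar domain.
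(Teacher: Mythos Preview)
Your proof is correct and follows essentially the same strategy as the paper's (which obtains Escobar's theorem as the $\varphi\equiv 0$ case of Theorem~\ref{thm: main1}): both use Bochner to make $P=|\nabla u|^2$ subharmonic, analyze the boundary maximum via the same dichotomy (your cases $\sigma_1^2 u+u''=0$ and $u'=0$ correspond exactly to the paper's subcases (ii) and (i)), and invoke the second tangential variation plus Hopf in the degenerate branch. The only cosmetic differences are that you work in arclength coordinates $u',u''$ rather than the frame notation $\hess u(e_i,e_j)$, and your rigidity endgame goes through a developing map and support-function ODE (or a reduction to Payne) whereas the paper reads off $k_g\equiv\sigma_1$ directly from $\hess u(e_1,e_2)=0$.
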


\noindent Motivated by this, Escobar proposed the conjecture that if an $n$-dimensional Riemannian manifold $M^n$ $(n \ge 3)$ has nonnegative Ricci curvature, and the second fundamental form $A$ satisfies $A \ge cI$ on $\partial M$ for some positive constant $c$, then $\sigma_1 \ge c$ with equality holding only when $M$ is a Euclidean ball of radius $\frac{1}{c}$. This conjecture was recently resolved in the case of nonnegative sectional curvature by Xia--Xiong~\cite{XX}.

In this paper, we are concerned with  obtaining a sharp lower bound for the first (nonzero) eigenvalue of a Steklov-type eigenvalue problem on a compact surface. Let $\Omega$ be a compact surface with smooth boundary $\partial\Omega$, and let $\varphi \in C^\infty(\mathbb{R})$ satisfy
\begin{align}\label{condition on phi}
	\begin{cases}
		\dfrac{\varphi (t)}{t} \ge 0 & \text{for } t \in \mathbb{R} \setminus \{0\}, \\[6pt]
		{\displaystyle \lim_{t \to 0} \frac{\varphi (t)}{t}} & \text{exists}.
	\end{cases}
\end{align}
Consider the following Steklov-type eigenvalue problem:
\begin{align}\label{problem: Second}
	\begin{cases}
		\Delta u = \varphi(u) & \text{in } \Omega, \\[6pt]
		\dfrac{\partial u}{\partial\nu} = \sigma\,u & \text{on } \partial\Omega,
	\end{cases}
\end{align}
where $\nu$ denotes the outward unit {{conormal}} vector along $\partial\Omega$. Let $u(x)$ be the first eigenfunction with corresponding eigenvalue $\sigma_1$ of problem~\eqref{problem: Second}. If $\Omega$ is a compact surface whose Gaussian curvature satisfies $K(x) \ge -\varphi'(u(x))$ for all $x \in \Omega$, and the geodesic curvature satisfies $k_g \ge c > 0$ along $\partial \Omega$ for some constant $c \in \mathbb{R}$, then
\[
\sigma_1 + \frac{1}{\sigma_1} \max_{\partial\Omega} \frac{\varphi(u(x))}{u(x)} \ge c.
\]
Moreover, equality holds if and only if $\Omega$ is a Euclidean disk of radius $\frac{1}{c}$ and $\varphi \equiv 0$. This result is one direction of extension of Theorems~\ref{thm: Payne} and~\ref{thm: Escobar} (see Theorem~\ref{thm: main1}). In particular, when $\varphi(t) = \alpha t$ for some constant $\alpha \ge 0$, the inequality reduces to
\[
\sigma_1 + \frac{\alpha}{\sigma_1} \ge c.
\]
In this case, equality holds if and only if $\Omega$ is a Euclidean disk of radius $\frac{1}{c}$ and $\alpha = 0$ (see Corollary~\ref{cor: Schrodinger}).

In Section 3, we study a fourth-order Steklov-type eigenvalue problem. Let $\Omega$ be a compact surface with smooth boundary $\partial \Omega$. Consider the fourth-order eigenvalue boundary problem on $\Omega$ as follows:

\begin{align} \label{problem: Fourth}
\begin{cases}
	\Delta^2 u = \varphi(u) & \text{in } \Omega, \\
	u = 0 & \text{on } \partial \Omega, \\
	\Delta u = p_1 \dfrac{\partial u}{\partial \nu} & \text{on } \partial \Omega,
\end{cases}
\end{align}
where $\nu$ denotes the outward unit conormal vector along $\partial \Omega$ and $\varphi \in C^\infty(\mathbb{R})$. In particular, when $\varphi=0$, this problem was initiated by Kuttler-Sigillito \cite{KS} and by Payne \cite{Payne1970}. Since then, it has been developed by many mathematicians (see \cite{BGM, BFG, FGW, RS} for example). In \cite{Payne1970}, Payne obtained a sharp lower bound for the first eigenvalue of the problem (\ref{problem: Fourth}) if $\Omega \subset \mathbb{R}^2$ is a bounded convex domain and $\varphi = 0$. More precisely,

\begin{thm}[Payne~\cite{Payne1970}] \label{thm: Payne fourth}
	Let $\Omega \subset \mathbb{R}^2$ be a bounded convex domain with smooth boundary $\partial\Omega$. Assume that the geodesic curvature satisfies $k_g \ge c > 0$ along $\partial\Omega$ for some constant $c \in \mathbb{R}$. Then the first eigenvalue $p_1$ of problem~\eqref{problem: Fourth} with $\varphi = 0$ satisfies
	\[
	p_1 \ge 2c.
	\]
	Moreover, equality holds if and only if $\Omega$ is a disk of radius $\frac{1}{c}$.
\end{thm}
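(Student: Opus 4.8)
The plan is to combine the variational characterization of $p_1$ with Reilly's formula applied to the first eigenfunction, exploiting that $\Omega\subset\mathbb R^2$ is flat.

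First I would record the weak formulation of \eqref{problem: Fourth} with $\varphi=0$: integrating by parts twice, a function $u\in H^2(\Omega)\cap H^1_0(\Omega)$ solves the problem with eigenvalue $p$ if and only if $\int_\Omega \Delta u\,\Delta v\,dA=p\int_{\partial\Omega}\frac{\partial u}{\partial\nu}\frac{\partial v}{\partial\nu}\,ds$ for all $v\in H^2(\Omega)\cap H^1_0(\Omega)$. Hence, by standard functional analysis (the quantity $\|\Delta u\|_{L^2(\Omega)}$ is equivalent to $\|u\|_{H^2(\Omega)}$ on $H^2\cap H^1_0$, and $u\mapsto \partial u/\partial\nu$ maps $H^2\cap H^1_0$ compactly into $L^2(\partial\Omega)$),
\[
p_1=\min\left\{\frac{\int_\Omega(\Delta u)^2\,dA}{\int_{\partial\Omega}\left(\frac{\partial u}{\partial\nu}\right)^2 ds}\;:\; u\in H^2(\Omega)\cap H^1_0(\Omega),\ \tfrac{\partial u}{\partial\nu}\not\equiv 0\text{ on }\partial\Omega\right\},
\]
and the minimum is attained by the first eigenfunction $u$, which is smooth up to $\partial\Omega$ by elliptic regularity and vanishes on $\partial\Omega$.

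Next I would apply Reilly's formula to $u$. Since $u\equiv 0$ on $\partial\Omega$, the tangential gradient and tangential Laplacian of $u$ along $\partial\Omega$ vanish, so Reilly's identity collapses to
\[
\int_\Omega(\Delta u)^2\,dA=\int_\Omega|\hess u|^2\,dA+\int_\Omega K\,|\nabla u|^2\,dA+\int_{\partial\Omega}k_g\left(\frac{\partial u}{\partial\nu}\right)^2 ds .
\]
(If one prefers to avoid citing Reilly, the same identity follows from $(\Delta u)^2-|\hess u|^2=\Div\bigl(\Delta u\,\nabla u-\hess u(\nabla u)\bigr)$ after integration, using $\nabla u=\frac{\partial u}{\partial\nu}\nu$ and $\Delta u=\hess u(\nu,\nu)+k_g\frac{\partial u}{\partial\nu}$ on $\partial\Omega$.) Since $\Omega\subset\mathbb R^2$ is flat, $K\equiv 0$; combining the pointwise inequality $|\hess u|^2\ge\tfrac12(\Delta u)^2$ with the hypothesis $k_g\ge c$ gives
\[
\int_\Omega(\Delta u)^2\,dA\ \ge\ \tfrac12\int_\Omega(\Delta u)^2\,dA+c\int_{\partial\Omega}\left(\frac{\partial u}{\partial\nu}\right)^2 ds ,
\]
hence $\int_\Omega(\Delta u)^2\,dA\ge 2c\int_{\partial\Omega}(\partial u/\partial\nu)^2\,ds$, and therefore $p_1\ge 2c$.

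For the equality statement, $p_1=2c$ forces equality in both inequalities, i.e. $\hess u=\tfrac{\Delta u}{2}g$ throughout $\Omega$ and $k_g\equiv c$ on $\partial\Omega$. Taking the divergence of $\hess u=\tfrac{\Delta u}{2}g$ in the plane yields $\nabla\Delta u=\tfrac12\nabla\Delta u$, so $\Delta u$ is a constant; integrating $\hess u=(\text{const})g$ then shows $u=a|x-x_0|^2+d$ for some $x_0\in\mathbb R^2$ and $a,d\in\mathbb R$ with $a\neq0$. Since $u\equiv0$ on $\partial\Omega$ and $u\not\equiv0$, we get $-d/a=:R^2>0$ and $\partial\Omega\subset\{|x-x_0|=R\}$, which by connectedness of $\Omega$ means $\Omega$ is the disk of radius $R$ centered at $x_0$; then $k_g\equiv 1/R=c$ forces $R=1/c$. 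Conversely, for the disk of radius $1/c$ one checks directly (with $u=\tfrac c2(|x|^2-c^{-2})$, for which $\Delta u=2c$ and $\partial u/\partial\nu=1$ on the boundary) that $p_1=2c$. I expect the Reilly computation and the final estimate to be routine; the points requiring care are the functional-analytic setup of the variational problem, so that a smooth minimizer vanishing on $\partial\Omega$ exists and Reilly's formula genuinely applies, and making the rigidity argument fully rigorous.
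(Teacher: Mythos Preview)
Your proof is correct, but it follows a different route from the paper's. The paper (in its proof of the more general Theorem~\ref{thm: main2}, which specializes to Payne's result when $K\equiv 0$ and $\varphi\equiv 0$) works pointwise: it introduces the auxiliary ``$P$--function'' $w=|\nabla u|^2-u\,\Delta u$, shows via Bochner that $\Delta w\ge 0$, and then applies the strong maximum principle. In Case~I the maximum of $w$ lies on $\partial\Omega$, and a direct computation using the boundary conditions yields $\partial w/\partial\nu=(\partial u/\partial\nu)^2(p_1-2k_g)$ there; the Hopf boundary point lemma forces $p_1>2k_g\ge 2c$. In the rigidity Case~II, $w$ is constant, from which $K\equiv 0$ and $\partial u/\partial\nu$ and $\Delta u$ are constant on $\partial\Omega$; the paper then invokes Serrin's overdetermined symmetry theorem to conclude $\Omega$ is a disk.

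By contrast, you use the integrated Reilly identity together with the variational characterization of $p_1$, which is essentially the Wang--Xia approach cited in the paper. Your argument is shorter and avoids both the Hopf lemma and Serrin's theorem: the inequality drops out in one line from $|\hess u|^2\ge\tfrac12(\Delta u)^2$, and rigidity is handled directly by integrating $\hess u=\tfrac{\Delta u}{2}g$. The paper's pointwise method, on the other hand, yields the slightly sharper information that strict inequality $p_1>2c$ holds unless $w$ is constant, and its framework is what allows the extension to the nonlinear term $\varphi$ in Theorem~\ref{thm: main2}. One small point to tighten in your write-up: equality in $\int_{\partial\Omega}k_g(\partial_\nu u)^2\ge c\int_{\partial\Omega}(\partial_\nu u)^2$ a priori gives $k_g=c$ only where $\partial_\nu u\neq 0$; it is cleaner to first deduce from $\hess u=\tfrac{\Delta u}{2}g$ that $\Omega$ is a disk of some radius $R$, compute $p_1=2/R$ for that disk, and then read off $R=1/c$ from $p_1=2c$.
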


Subsequently, Wang--Xia~\cite{WX} extended Theorem~\ref{thm: Payne fourth} to compact manifolds with boundary and nonnegative Ricci curvature (see also~\cite{FGW} for the case of higher-dimensional Euclidean domains) with $\varphi = 0$. More recently, Batista--Lima--Sousa--Vieira~\cite{Batista2023} obtained a lower bound for the first eigenvalue of the fourth-order Steklov problem on free boundary minimal hypersurfaces in a Euclidean ball. In Section~3, we establish the same sharp lower bound for the first eigenvalue of problem~\eqref{problem: Fourth} when $\Omega$ is a compact surface with nonnegative Gaussian curvature and the function $\varphi(t)$ satisfies {{$t\varphi(t) \le 0$}} (see Theorem~\ref{thm: main2}).

%%%%%%%%%%%%%%%%%%%%%%%%%%%%%%%%%%%%%%%%%%%%%%%%%%%%%%%%%%%%%%%%%%%%%%%%%%%%%%%%%%%%%%%%%%%%%%%%
\section{First eigenvalue of the second-order Steklov-type eigenvalue problem}
%%%%%%%%%%%%%%%%%%%%%%%%%%%%%%%%%%%%%%%%%%%%%%%%%%%%%%%%%%%%%%%%%%%%%%%%%%%%%%%%%%%%%%%%%%%%%%%%

The first eigenvalue $\sigma_1$ for the Steklov-type eigenvalue problem~\eqref{problem: Second} is characterized as follows:
\[
\sigma_1 = \inf\left\{ \frac{\int_\Omega |\nabla f|^2 + \int_\Omega f\varphi(f)}{\int_{\partial \Omega} f^2} : f \in H^1(\Omega) \setminus \{0\}, \, f\varphi(f) \in L^1(\Omega) \right\}.
\]
In this section, we establish a sharp lower bound for the first eigenvalue of problem~\eqref{problem: Second} on a compact surface under appropriate curvature conditions. More precisely, we prove the following:

\begin{thm} \label{thm: main1}
	Let $\Omega$ be a compact surface with smooth boundary $\partial\Omega$. Let $\varphi : \mathbb{R} \to \mathbb{R}$ be a smooth function satisfying condition~\eqref{condition on phi}. Assume that the geodesic curvature satisfies $k_g \ge {{c > 0}}$ along $\partial\Omega$ for some constant $c \in \mathbb{R}$, and that the Gaussian curvature $K$ satisfies
	\[
	K(x) \ge -\varphi'(u(x)) \quad \text{for all } x \in \Omega,
	\]
	where $u(x)$ denotes the first eigenfunction of problem~\eqref{problem: Second} with corresponding eigenvalue $\sigma_1$. Then we have
	\[
	\sigma_1 + \frac{1}{\sigma_1} \max_{\partial\Omega} \frac{\varphi(u(x))}{u(x)} \ge c.
	\]
	Moreover, equality holds if and only if $\Omega$ is a Euclidean disk of radius $\frac{1}{c}$ and $\varphi \equiv 0$ on~$\Omega$.
\end{thm}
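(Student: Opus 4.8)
The plan is to adapt Payne's original argument, which rests on a Pohozaev/Reilly-type integral identity applied to the first eigenfunction $u$, using the Bochner formula together with the boundary structure of the Steklov condition. First I would apply the Bochner–Weitzenböck formula to $u$ on the surface $\Omega$:
\[
\frac{1}{2}\Delta|\nabla u|^2 = |\hess u|^2 + \langle \nabla u, \nabla \Delta u\rangle + K\,|\nabla u|^2 .
\]
Using the equation $\Delta u = \varphi(u)$ gives $\langle\nabla u,\nabla\Delta u\rangle = \varphi'(u)|\nabla u|^2$, so the curvature hypothesis $K \ge -\varphi'(u)$ forces
\[
\frac{1}{2}\Delta|\nabla u|^2 \ge |\hess u|^2 \ge \tfrac12(\Delta u)^2 = \tfrac12\varphi(u)^2 ,
\]
where the refined Kato-type inequality $|\hess u|^2 \ge \frac12(\Delta u)^2$ holds on surfaces (the traceless part of a $2\times2$ symmetric matrix). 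Integrating over $\Omega$ and applying the divergence theorem reduces everything to a boundary integral of $\frac{\partial}{\partial\nu}|\nabla u|^2$, which I would expand along $\partial\Omega$ using the Gauss–Weingarten decomposition of $\hess u$ into tangential, mixed, and normal components, feeding in the Steklov boundary condition $\partial u/\partial\nu = \sigma_1 u$.

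Next I would carefully compute $\frac{\partial}{\partial\nu}|\nabla u|^2$ on $\partial\Omega$ in terms of $u$, $u_\nu = \sigma_1 u$, the tangential derivative $u_s$, the geodesic curvature $k_g$, and $\Delta u = \varphi(u)$. The standard identity reads
\[
\frac12\frac{\partial}{\partial\nu}|\nabla u|^2 = u_\nu u_{\nu\nu} + u_s u_{s\nu} = u_\nu\bigl(\Delta u - u_{ss} - k_g u_\nu\bigr) + u_s u_{s\nu},
\]
and on $\partial\Omega$ one has $u_{ss} = \frac{d^2}{ds^2}u$, $u_s = \frac{d}{ds}u$, etc. Using $u = u_\nu/\sigma_1$ on the boundary to substitute, integrating the $u_s u_{s\nu}$ term by parts around the closed curve $\partial\Omega$, and invoking the eigenvalue/variational characterization $\int_\Omega(|\nabla u|^2 + u\varphi(u)) = \sigma_1\int_{\partial\Omega}u^2$, I would obtain an inequality of the shape
\[
0 \le \int_{\partial\Omega}\Bigl(-k_g u_\nu^2 + \sigma_1 u_\nu^2 + \tfrac{1}{\sigma_1}\varphi(u)u_\nu^2\Bigr)\,ds + (\text{terms with a favorable sign}),
\]
after which the hypotheses $k_g \ge c$ and $\varphi(u)/u \le \max_{\partial\Omega}\varphi(u)/u$ yield $\sigma_1 + \frac{1}{\sigma_1}\max_{\partial\Omega}\frac{\varphi(u)}{u} \ge c$. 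Here I would need to be careful that $u$ does not vanish identically on $\partial\Omega$ (else $\sigma_1\int_{\partial\Omega}u^2 = 0$ contradicts $\sigma_1 > 0$ unless $u$ is constant, forcing $\varphi(u)$-sign considerations), and that $\varphi(u)/u \ge 0$ so the term I am bounding is genuinely nonnegative.

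The main obstacle I expect is the equality analysis. Equality in the chain above forces equality in the Kato inequality, hence $\hess u = \frac{\Delta u}{2}g = \frac{\varphi(u)}{2}g$, i.e. $u$ has umbilic Hessian everywhere; it also forces $K = -\varphi'(u)$ pointwise, $k_g \equiv c$ on $\partial\Omega$, $\varphi(u)/u \equiv \max_{\partial\Omega}\varphi(u)/u$ on the boundary, and vanishing of whatever extra nonnegative boundary terms appeared. From $\hess u = \frac{\varphi(u)}{2}g$ one shows $|\nabla u|^2$ is a function of $u$ alone and the level sets of $u$ are geodesically parallel circles of constant geodesic curvature; combined with $k_g = c$ this should pin $\Omega$ down to a flat disk of radius $1/c$ and force $\varphi(u)=0$ along a range of values, and then (using that $u$ is nonconstant and $K = -\varphi'(u) = 0$) $\varphi \equiv 0$ on the relevant interval, hence on $\Omega$. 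The delicate points are: justifying that $u$ is an honest Morse-type function whose regular level sets foliate $\Omega$ (handling critical points of $u$), and propagating $\varphi(u(x)) = 0$ from the boundary values of $u$ to all of $\Omega$ using that the image $u(\Omega)$ is an interval — this is where condition~\eqref{condition on phi} and the sign of $\varphi(t)/t$ must be used to rule out $\varphi \not\equiv 0$. I would model this part on Escobar's equality discussion in Theorem~\ref{thm: Escobar}, adding the bookkeeping needed to handle the extra $\varphi$ terms.
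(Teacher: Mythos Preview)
Your approach is genuinely different from the paper's, and it contains a real gap at the point you flag as ``terms with a favorable sign''.

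The paper does \emph{not} integrate. It observes from Bochner that $|\nabla u|^2$ is subharmonic (using $K+\varphi'(u)\ge 0$), and then applies the strong maximum principle: either the maximum of $|\nabla u|^2$ is attained only at some boundary point $p$, or $|\nabla u|^2$ is constant. In the first case the Hopf boundary point lemma gives $\partial_\nu|\nabla u|^2(p)>0$, and a pointwise expansion of $\hess u(\nabla u,e_2)$ at $p$ (using $\hess u(e_1,e_2)=(\sigma_1-k_g)u_s$ and $\hess u(e_2,e_2)=\varphi(u)-\hess u(e_1,e_1)$, after a dichotomy on whether $u_s(p)=0$) yields directly
\[
\sigma_1 - k_g(p) + \frac{1}{\sigma_1}\,\frac{\varphi(u(p))}{u(p)} > 0,
\]
hence the inequality. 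The constant case gives $\hess u\equiv 0$ and $K\equiv 0$, and one argues that $\Omega$ is a flat disk of radius $1/\sigma_1$. No Kato inequality, no integration, no variational identity is used. Incidentally, Payne's original 1970 argument is also a $P$-function/maximum-principle argument, not a Pohozaev--Reilly identity.

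Your integral route runs into the following concrete obstacle. Carrying out the boundary computation you describe, one finds on $\partial\Omega$
\[
\tfrac12\,\partial_\nu|\nabla u|^2 = (\sigma_1-k_g)\,u_s^2 + u_\nu\,\varphi(u) - u_\nu\,u_{ss} - k_g\,u_\nu^2 ,
\]
and after integrating by parts the term $-\int_{\partial\Omega}u_\nu u_{ss}=\sigma_1\int_{\partial\Omega}u_s^2$ (using $u_\nu=\sigma_1 u$), the integrated Bochner inequality becomes
\[
0 \;\le\; \int_{\partial\Omega}\Bigl[(2\sigma_1-k_g)\,u_s^2 \;+\; \tfrac{1}{\sigma_1}\tfrac{\varphi(u)}{u}\,u_\nu^2 \;-\; k_g\,u_\nu^2\Bigr]\,ds .
\]
There is \emph{no} $\sigma_1 u_\nu^2$ term; instead the tangential contribution $(2\sigma_1-k_g)\,u_s^2$ appears, and its coefficient has no sign a priori. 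If one assumes for contradiction that $\sigma_1+\tfrac{1}{\sigma_1}\max_{\partial\Omega}\frac{\varphi(u)}{u}<c\le k_g$, the inequality above only yields $\sigma_1^2\int_{\partial\Omega}u^2 < \int_{\partial\Omega}u_s^2$, which is not a contradiction (Wirtinger's inequality goes the other way). So the step ``terms with a favorable sign'' fails, and the argument stalls before the main inequality is reached. The pointwise Hopf-lemma argument sidesteps exactly this problem, because at the maximum point one is in case $u_s(p)=0$ or in case $\hess u(e_1,e_1)=(k_g-\sigma_1)u_\nu$, and in either case the troublesome $u_s$ contribution can be eliminated at that single point.
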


\begin{proof}
The Bochner formula implies that
\begin{align} \label{ineq: Bochner}
\frac12\,\Delta|\nabla u|^2
  &= \lvert\mathrm{Hess}\,u\rvert^2     + \langle \nabla u,\nabla\varphi(u)\rangle     + \mathrm{Ric}(\nabla u,\nabla u) \nonumber \\
  &= \lvert\mathrm{Hess}\,u\rvert^2+\varphi'(u)\,\lvert\nabla u\rvert^2+K\,\lvert\nabla u\rvert^2   \nonumber \\
  &\ge0,
\end{align}
which shows that $|\nabla u|^2$ is a subharmonic function in~$\Omega$.  By the strong maximum principle, we have two cases:\\

\noindent {\bf Case I:} The maximum value of $|\nabla u|^2$ on $\overline{\Omega}$ occurs only on the boundary $\partial \Omega$.\\
\noindent {\bf Case II:} The function $|\nabla u|^2$ is constant on $\Omega$.\\

In Case I, the maximum value of $|\nabla u|^2$ is attained at some point $p\in\partial\Omega$. Choose a local orthonormal frame $\{e_1,e_2\}$ near $p$ satisfying that $e_1$ is tangent to $\partial\Omega$ and $e_2=\nu$.  Note that
\[
\nabla_{e_1}e_1 \;=\; -\,k_g\,e_2,
\]
where $k_g$ denotes the geodesic curvature of $\partial\Omega$. Note that, at $p\in\partial\Omega$,
\begin{align*}
0 &=\;e_1\bigl\langle \nabla u,\nabla u\bigr\rangle \\
&=\;2\,\mathrm{Hess}\,u\,(e_1,\nabla u)\\
&=\;2\Bigl(\langle\nabla u,e_1\rangle\,\mathrm{Hess}\,u(e_1,e_1) + \langle\nabla u,e_2\rangle\,\mathrm{Hess}\,u(e_1,e_2)\Bigr).
\end{align*}
Moreover,
\begin{align} \label{eq: Hess}
\mathrm{Hess}\,u(e_1,e_2) & = e_1 e_2 u - (\nabla_{e_1}e_2)u \nonumber \\
&= e_1(\sigma_1\,u) - k_g \,e_1u \nonumber \\
&= (\sigma_1-k_g)\, \langle\nabla u, e_1\bigr\rangle.
\end{align}
Thus we get
\[
\bigl\langle\nabla u,e_1\bigr\rangle
\;\Bigl(
   \mathrm{Hess}\,u(e_1,e_1)
   \;+\;
   (\sigma_1-k_g)\,\langle\nabla u,e_2\rangle
\Bigr)
=0,
\]
which gives two possibilities at $p$: \\

\noindent  \text{(i)} $\langle\nabla u,e_1\rangle=0$ \\
 \text{(ii)} $\mathrm{Hess}\,u(e_1,e_1)=(k_g-\sigma_1)\,\langle\nabla u,e_2\rangle.$ \\

\noindent For (i), suppose $\langle\nabla u,e_1\rangle=0$ at $p \in \partial \Omega$.  By the above, we see that
\[
\mathrm{Hess}\,u(e_1,e_2)=0
\quad\text{at }p.
\]
Since $p$ is the maximum point of the function $|\nabla u|^2$, we have
\[
\mathrm{Hess}\,|\nabla u|^2(e_1,e_1)\le0
\quad\text{at }p.
\]
Thus we get
\begin{align*}
0&\ge \mathrm{Hess}\,|\nabla u|^2(e_1,e_1)\\
&= e_1e_1\langle\nabla u,\nabla u\rangle
  -(\nabla_{e_1}e_1)\langle\nabla u,\nabla u\rangle\\
&= 2\,e_1\bigl(\mathrm{Hess}\,u(e_1,\nabla u)\bigr)
  +k_g\,e_2\langle\nabla u,\nabla u\rangle\\
&= 2\,e_1\bigl(\langle\nabla u,e_1\rangle\,\mathrm{Hess}\,u(e_1,e_1)
             +\langle\nabla u,e_2\rangle\,\mathrm{Hess}\,u(e_1,e_2)\bigr)
  +2\,k_g\,\mathrm{Hess}\,u(e_2,\nabla u)\\
&= 2\bigl[\mathrm{Hess}\,u(e_1,e_1)^2
         +\langle\nabla u,\nabla_{e_1}e_1\rangle\,\mathrm{Hess}\,u(e_1,e_1)\bigr]\\
&\quad+2\,\langle\nabla u,e_2\rangle\,e_1\bigl(\mathrm{Hess}\,u(e_1,e_2)\bigr)
  +2\,k_g\,\langle\nabla u,e_2\rangle\,\mathrm{Hess}\,u(e_2,e_2),
\end{align*}
where we used the fact that $\langle\nabla u,e_1\rangle=0$ and $\mathrm{Hess}\,u(e_1,e_2)=0$ at $p \in \partial \Omega$. Moreover, since $\langle\nabla u,e_2\rangle=\sigma_1\,u$ by the boundary condition, we have
\[
e_1\langle\nabla u,e_2\rangle
=\sigma_1\langle\nabla u,e_1\rangle,
\]
which shows that
\[
\mathrm{Hess}\,u(e_1,e_2)
+\langle\nabla u,\nabla_{e_1}e_2\rangle
=\sigma_1\langle\nabla u,e_1\rangle.
\]
Thus
\[
\mathrm{Hess}\,u(e_1,e_2)
=(\sigma_1-k_g)\,\langle\nabla u,e_1\rangle.
\]
Differentiating both sides in the direction of $e_1$ at $p \in \partial \Omega$ gives
\begin{align*}
e_1\bigl(\mathrm{Hess}\,u(e_1,e_2)\bigr) &=e_1\bigl((\sigma_1-k_g)\langle\nabla u,e_1\rangle\bigr) \\
&=(\sigma_1-k_g)\bigl(\mathrm{Hess}\,u(e_1,e_1) +\langle\nabla u,\nabla_{e_1}e_1\rangle\bigr).
\end{align*}
Using this, we obtain
\begin{align} \label{ineq: bbb}
0 &\ge \mathrm{Hess}\,u(e_1,e_1)^2 -k_g\langle\nabla u,e_2\rangle\,\mathrm{Hess}\,u(e_1,e_1) \nonumber \\
&\quad +\langle\nabla u,e_2\rangle(\sigma_1-k_g)\bigl(\mathrm{Hess}\,u(e_1,e_1)-k_g\langle\nabla u,e_2\rangle\bigr) \nonumber \\
&\quad + k_g\langle\nabla u,e_2\rangle\,\mathrm{Hess}\,u(e_2,e_2)
\end{align}
at $p \in \partial \Omega$. On the other hand, the Hopf boundary point lemma implies that
\begin{align} \label{ineq: aaa}
0 &< e_2\langle\nabla u,\nabla u\rangle \nonumber \\
&=2\,\mathrm{Hess}\,u(\nabla u, e_2) \nonumber \\
&=2\, \langle\nabla u,e_2\rangle\,\mathrm{Hess}\,u(e_2,e_2) \nonumber \\
&=\langle\nabla u,e_2\rangle\bigl(\varphi(u)-\mathrm{Hess}\,u(e_1,e_1)\bigr)
\end{align}
at $p \in \partial \Omega$. Combining (\ref{ineq: bbb}) and (\ref{ineq: aaa}), we get
\begin{align*}
&\mathrm{Hess}\,u(e_1,e_1)^2 +2(\sigma_1-k_g)\,\langle\nabla u,e_2\rangle\,\mathrm{Hess}\,u(e_1,e_1) -k_g(\sigma_1-k_g)\,\langle\nabla u,e_2\rangle^2 \\
&\quad \le \sigma_1 \langle\nabla u,e_2\rangle\,\mathrm{Hess}\,u(e_1,e_1) - k_g\,\langle\nabla u,e_2\rangle\,(\varphi(u) - \mathrm{Hess}\,u(e_1,e_1)) \\
&\quad < k_g\,\langle\nabla u,e_2\rangle\bigl(\mathrm{Hess}\,u(e_1,e_1)-\varphi(u)\bigr) + \sigma_1\,\varphi(u) \langle\nabla u,e_2\rangle\,\\
&\quad < \sigma_1\,\varphi(u)\,\langle\nabla u,e_2\rangle
\end{align*}
at $p \in \partial \Omega$.  We claim that $u(p)\neq 0$. To see this, suppose that $u(p)=0$. Then
\begin{align} \label{eq: u(p) is not zero}
\langle\nabla u,e_2\rangle = \frac{\partial u}{\partial \nu}= \sigma_1 u= 0
\end{align}
at $p \in \partial \Omega$, which is a contradiction by (\ref{ineq: aaa}). Using this, we finally obtain
\begin{align*}
0 &\le \left(\mathrm{Hess}\,u(e_1,e_1)+(\sigma_1-k_g)\langle\nabla u,e_2\rangle\right)^2 \\
&<(\sigma_1-k_g)^2\langle\nabla u,e_2\rangle^2 + k_g(\sigma_1-k_g)\langle\nabla u,e_2\rangle^2 + \sigma_1\,\varphi(u)\,\langle\nabla u,e_2\rangle\\
&= \sigma_1(\sigma_1-k_g)\langle\nabla u,e_2\rangle^2 + \sigma_1\,u\,\frac{\varphi(u)}{u}\,\langle\nabla u,e_2\rangle \\
&= \sigma_1(\sigma_1-k_g)\langle\nabla u,e_2\rangle^2 + \frac{\varphi(u)}{u}\,\langle\nabla u,e_2\rangle^2 \\
&= \left(\sigma_1^2 - k_g\,\sigma_1 + \frac{\varphi(u)}{u}\right) \langle\nabla u,e_2\rangle^2
\end{align*}
at $p \in \partial \Omega$. Therefore we have
\[
\sigma_1^2 - k_g\,\sigma_1 + \frac{\varphi(u)}{u} > 0,
\]
which gives the conclusion.\\

For (ii), suppose that
\begin{align} \label{eq: hhh}
\mathrm{Hess}\,u(e_1,e_1) = (k_g - \sigma_1)\,\langle\nabla u,e_2\rangle
\quad\text{at }p.
\end{align}
The Hopf boundary point lemma implies that
\[
0 < \frac{\partial |\nabla u|^2}{\partial\nu}(p),
\]
which gives
\begin{align} \label{ineq: iii}
0 < e_2\langle\nabla u,\nabla u\rangle
\quad\text{at }p.
\end{align}
Combining (\ref{eq: hhh}) and (\ref{ineq: iii}) gives
\begin{align} \label{ineq: ccc}
0 &< \mathrm{Hess}\,u(e_2,\nabla u)  \nonumber \\
  &= \langle\nabla u,e_1\rangle\,\mathrm{Hess}\,u(e_1,e_2)    + \langle\nabla u,e_2\rangle\,\mathrm{Hess}\,u(e_2,e_2) \nonumber \\
  &= \langle\nabla u,e_1\rangle\,\mathrm{Hess}\,u(e_1,e_2)    + \langle\nabla u,e_2\rangle\bigl(\varphi(u)-\mathrm{Hess}\,u(e_1,e_1)\bigr)  \nonumber \\
  &= (\sigma_1-k_g)\,\langle\nabla u,e_1\rangle^2    + \langle\nabla u,e_2\rangle\bigl(\varphi(u)-\mathrm{Hess}\,u(e_1,e_1)\bigr)  \nonumber \\
  &= \varphi(u)\,\langle\nabla u,e_2\rangle    + (\sigma_1-k_g)\,\lvert\nabla u\rvert^2
\end{align}
at $p \in \partial \Omega$. If $u(p)=0$, then we get
$$\sigma_1>k_g (p)$$
by (\ref{eq: u(p) is not zero}) and (\ref{ineq: ccc}). If $u(p) \neq 0$, then (\ref{ineq: ccc}) shows that
\begin{align*}
(\sigma_1-k_g)\,\lvert\nabla u\rvert^2 &> -\varphi(u)\,\langle\nabla u,e_2\rangle \\
&= -\,\frac{1}{\sigma_1}\,\frac{\varphi(u)}{u}\,\sigma_1\,u\,\langle\nabla u,e_2\rangle \\
&= -\,\frac{1}{\sigma_1}\,\frac{\varphi(u)}{u}\,\langle\nabla u,e_2\rangle^2 \\
&\ge\;-\,\frac{1}{\sigma_1}\,\frac{\varphi(u)}{u}\,\lvert\nabla u\rvert^2
\end{align*}
at $p \in \partial \Omega$. Here we used the condition on $\varphi$ that $\dfrac{\varphi (t)}{t} \ge 0$. Therefore
\[
\sigma_1 - k_g + \frac{1}{\sigma_1} \frac{\varphi(u)}{u} > 0
\quad\text{at }p,
\]
which gives the conclusion. \\

In Case II, $|\nabla u|^2$ is constant in $\Omega$. From (\ref{ineq: Bochner}), it follows that
$$\mathrm{Hess}\,u=0 ~~{\rm and }~~ \varphi ' (u) = - K.$$
Since $\Delta u = \varphi (u)=0$ in $\Omega$, we see that
$$K=0~~ {\rm on}~~ \Omega,$$
i.e., $\Omega$ is flat. Moreover, along the boundary $\partial \Omega$, we have
$$0= (\sigma_1 -k_g) \langle \nabla u, e_1\rangle$$
by (\ref{eq: Hess}). Suppose that $\langle \nabla u, e_1\rangle = 0$. From the fact that $\nabla u$ is constant and the boundary condition, it follows that $u$ is constant, which is a contradiction. Hence
$$\sigma_1 -k_g = 0$$
along $\partial \Omega$, which implies that $\Omega$ is a flat disk of radius $\frac{1}{\sigma_1}$. This completes the proof.

\end{proof}

In particular, if we choose the function $\varphi(t) = \alpha t$ for some constant $\alpha \ge 0$, then the eigenvalue problem~\eqref{problem: Second} becomes a {{Schr\"odinger--Steklov}} eigenvalue problem (see~\cite{Auchmuty, Mavinga} for instance). In this case, one immediately obtains the following consequence.

\begin{cor} \label{cor: Schrodinger}
	Let $\Omega$ be a compact surface with smooth boundary $\partial\Omega$ and with Gaussian curvature satisfying $K \ge -\alpha$ for some constant $\alpha \ge 0$. Define the smooth function $\varphi : \mathbb{R} \to \mathbb{R}$ by $\varphi(t) = \alpha t$. Assume that the geodesic curvature satisfies $k_g \ge c > 0$ along $\partial\Omega$ for some constant $c \in \mathbb{R}$. Then the first eigenvalue $\sigma_1$ of problem~\eqref{problem: Second} satisfies
	\[
	\sigma_1 + \frac{\alpha}{\sigma_1} \ge c.
	\]
	Moreover, equality holds if and only if $\Omega$ is a Euclidean disk of radius $1/c$ and $\alpha = 0$.
\end{cor}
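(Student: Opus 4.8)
The plan is to obtain the corollary as a direct specialization of Theorem~\ref{thm: main1} to the linear choice $\varphi(t) = \alpha t$, so the work consists almost entirely of checking hypotheses. First I would verify condition~\eqref{condition on phi}: for $\varphi(t) = \alpha t$ with $\alpha \ge 0$ one has $\varphi(t)/t = \alpha \ge 0$ for all $t \neq 0$, and $\lim_{t\to 0}\varphi(t)/t = \alpha$ exists, so $\varphi$ is admissible. Next, since $\varphi'(t) \equiv \alpha$, the curvature bound $-\varphi'(u(x)) = -\alpha$ is constant, and the assumption $K \ge -\alpha$ is exactly the hypothesis $K(x) \ge -\varphi'(u(x))$ of Theorem~\ref{thm: main1}, where $u$ is the first eigenfunction of~\eqref{problem: Second}; the geodesic curvature condition $k_g \ge c > 0$ is carried over unchanged.

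With these verified, Theorem~\ref{thm: main1} yields
\[
\sigma_1 + \frac{1}{\sigma_1}\,\max_{\partial\Omega}\frac{\varphi(u(x))}{u(x)} \ge c,
\]
and since $\varphi(u(x))/u(x) = \alpha$ at every point of $\partial\Omega$ --- using the limiting value $\alpha$ at any zeros of $u$, consistent with~\eqref{condition on phi} --- the maximum equals $\alpha$, giving $\sigma_1 + \alpha/\sigma_1 \ge c$. For the equality statement, the characterization in Theorem~\ref{thm: main1} says equality holds precisely when $\Omega$ is a Euclidean disk of radius $\tfrac{1}{c}$ and $\varphi \equiv 0$ on $\Omega$; since $\varphi(t) = \alpha t$ vanishes identically if and only if $\alpha = 0$, this is exactly the stated condition that $\Omega$ be a Euclidean disk of radius $\tfrac{1}{c}$ with $\alpha = 0$. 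As a sanity check on the converse direction, on a flat disk of radius $\tfrac{1}{c}$ the classical Steklov spectrum gives $\sigma_1 = c$ (cf. Theorem~\ref{thm: Payne}), so with $\alpha = 0$ the left-hand side is indeed $c$.

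I do not expect a genuine obstacle here: every nontrivial ingredient is already contained in Theorem~\ref{thm: main1}, and the only point requiring a moment of care is the reading of $\varphi(u)/u$ at zeros of $u$, which~\eqref{condition on phi} settles. If one wanted a self-contained argument, one could instead re-run the Bochner computation of Theorem~\ref{thm: main1} with $\mathrm{Ric} = K$ and Schr\"odinger term $\alpha\,\lvert\nabla u\rvert^2$, but this would merely reproduce the general proof verbatim.
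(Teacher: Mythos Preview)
Your proposal is correct and matches the paper's approach exactly: the paper offers no separate proof of the corollary, stating only that it follows immediately from Theorem~\ref{thm: main1} upon taking $\varphi(t)=\alpha t$, which is precisely the verification you carry out.
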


\noindent We remark that when $\alpha = 0$, Corollary~\ref{cor: Schrodinger} recovers the results by Payne~\cite{Payne1970} and Escobar~\cite{Escobar1997}.

%%%%%%%%%%%%%%%%%%%%%%%%%%%%%%%%%%%%%%%%%%%%%%%%%%%%%%%%%%%%%%%%%%%%%%%%%%%%%%%%%%%%%%%%%%%%%%%%
\section{First eigenvalue of the fourth-order Steklov-type eigenvalue problem}
%%%%%%%%%%%%%%%%%%%%%%%%%%%%%%%%%%%%%%%%%%%%%%%%%%%%%%%%%%%%%%%%%%%%%%%%%%%%%%%%%%%%%%%%%%%%%%%%

In this section, we consider the fourth-order eigenvalue problem~\eqref{problem: Fourth} introduced in Section~1. The first eigenvalue $p_1$ of problem~\eqref{problem: Fourth} is variationally characterized by
\[
p_1 = \inf \left\{ \frac{\int_\Omega (\Delta f)^2 - \int_\Omega f \varphi(f)}{\int_{\partial \Omega} \left( \frac{\partial f}{\partial \nu} \right)^2} : f \in H^2(\Omega) \cap H^1_0(\Omega),\, f \varphi(f) \in L^1(\Omega),\, 0 \neq \frac{\partial f}{\partial \nu} \in L^2(\partial \Omega) \right\}.
\]
We establish a sharp lower bound for $p_1$ as follows:

\begin{thm} \label{thm: main2}
	Let $\Omega$ be a compact surface with smooth boundary and  Gaussian curvature $K\ge 0$. Let $\varphi: \mathbb{R} \to \mathbb{R}$ be a smooth function satisfying {{$t \varphi(t) \le 0$ for all $t \in \mathbb{R}$}}. Assume that the geodesic curvature satisfies $k_g \ge {{c > 0}}$ along $\partial\Omega$ for some constant $c \in \mathbb{R}$. Then the first eigenvalue $p_1$ of problem~\eqref{problem: Fourth} satisfies
	\[
	p_1 \ge 2c.
	\]
	Moreover, equality holds if and only if $\Omega$ is a Euclidean disk of radius $\frac{1}{c}$.
\end{thm}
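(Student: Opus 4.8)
The plan is to adapt the Bochner-type argument from Theorem~\ref{thm: main1} to the fourth-order setting by introducing the auxiliary function $w = -\Delta u$, where $u$ is a first eigenfunction of problem~\eqref{problem: Fourth} with eigenvalue $p_1$. Then $u$ satisfies $-\Delta w = \Delta^2 u = \varphi(u)$ in $\Omega$, $u = 0$ on $\partial\Omega$, and $w = -\Delta u = -p_1\,\partial u/\partial\nu$ on $\partial\Omega$. Since $u = 0$ on the boundary, on $\partial\Omega$ the tangential derivative $\langle\nabla u, e_1\rangle = 0$, so $\nabla u = (\partial u/\partial\nu)\,e_2$ and $|\nabla u|^2 = (\partial u/\partial\nu)^2$ there; moreover $\Delta u = \mathrm{Hess}\,u(e_1,e_1) + \mathrm{Hess}\,u(e_2,e_2)$ and, using $\nabla_{e_1}e_1 = -k_g e_2$ together with $u\equiv 0$ on $\partial\Omega$, one gets $\mathrm{Hess}\,u(e_1,e_1) = k_g\,\partial u/\partial\nu$ on $\partial\Omega$. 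Hence $\mathrm{Hess}\,u(e_2,e_2) = \Delta u - k_g\,\partial u/\partial\nu = -(p_1 + k_g)\,\partial u/\partial\nu$ on $\partial\Omega$.

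The second step is the Bochner estimate for $w = -\Delta u$. Since $-\Delta w = \varphi(u)$, the Bochner formula gives
\[
\tfrac12\Delta|\nabla w|^2 = |\mathrm{Hess}\,w|^2 + \langle\nabla w, \nabla(-\varphi(u))\rangle + K|\nabla w|^2,
\]
but the middle term is not obviously signed, so instead I would work directly with $w$ itself or pair the two equations. A cleaner route mirrors Payne's original argument: test the Rayleigh quotient and integrate by parts. Using $\int_\Omega(\Delta u)^2 = \int_\Omega w^2$ and $-\int_\Omega u\varphi(u) \ge 0$ (by hypothesis $t\varphi(t)\le 0$), one has $p_1\int_{\partial\Omega}(\partial u/\partial\nu)^2 = \int_\Omega w^2 - \int_\Omega u\varphi(u) \ge \int_\Omega w^2$. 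Separately, applying the second-order result (Theorem~\ref{thm: main1} / Corollary~\ref{cor: Schrodinger} with $\alpha = 0$) or a direct Reilly-type identity to the harmonic-like function $w$ is the key. Concretely, I would apply the Reilly formula to $u$: since $u = 0$ on $\partial\Omega$,
\[
\int_\Omega (\Delta u)^2 - |\mathrm{Hess}\,u|^2 = \int_\Omega K|\nabla u|^2 + \int_{\partial\Omega} k_g \Big(\frac{\partial u}{\partial\nu}\Big)^2.
\]
With $K \ge 0$ this yields $\int_\Omega w^2 \ge \int_\Omega|\mathrm{Hess}\,u|^2 + c\int_{\partial\Omega}(\partial u/\partial\nu)^2$, and by Cauchy–Schwarz $|\mathrm{Hess}\,u|^2 \ge \tfrac12(\Delta u)^2 = \tfrac12 w^2$, so $\int_\Omega w^2 \ge \tfrac12\int_\Omega w^2 + c\int_{\partial\Omega}(\partial u/\partial\nu)^2$, giving $\int_\Omega w^2 \ge 2c\int_{\partial\Omega}(\partial u/\partial\nu)^2$. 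Combining with $p_1\int_{\partial\Omega}(\partial u/\partial\nu)^2 \ge \int_\Omega w^2$ immediately yields $p_1 \ge 2c$.

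The main obstacle is the equality analysis. Equality forces $\varphi(u)\equiv 0$ wherever $u\neq 0$ (from $\int_\Omega u\varphi(u)=0$), $K\equiv 0$ on the support of $\nabla u$, $k_g \equiv c$ along $\partial\Omega$, and the Cauchy–Schwarz equality $|\mathrm{Hess}\,u|^2 = \tfrac12(\Delta u)^2$, which means $\mathrm{Hess}\,u = \tfrac12(\Delta u)\,g$ — i.e. $u$ is (up to sign) a quadratic distance function. Tracking this, $u = \tfrac{p_1}{4}(|x|^2 - R^2)$ type behavior on a flat disk, with $R = 1/c$ forced by $k_g = c$ and by the compatibility of the two boundary relations $\mathrm{Hess}\,u(e_1,e_1) = k_g\,\partial u/\partial\nu$ and $\mathrm{Hess}\,u(e_2,e_2) = -(p_1+k_g)\,\partial u/\partial\nu$ with $\mathrm{Hess}\,u = \tfrac12(\Delta u)g$; this gives $k_g\,\partial u/\partial\nu = \tfrac12\Delta u = \tfrac12 w = -\tfrac12 p_1\,\partial u/\partial\nu$, so $p_1 = -2k_g$ — wait, signs: since $\partial u/\partial\nu < 0$ is impossible to sign a priori, one instead uses the Hopf lemma on $w$ (noting $-\Delta w = \varphi(u)$ has a sign near the boundary, or that $w$ has constant sign by a maximum principle once $\varphi(u)\equiv 0$) to pin down orientations and conclude $\Omega$ is isometric to the Euclidean disk of radius $1/c$. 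Conversely one checks the disk achieves equality with $\varphi\equiv 0$. I expect the sign bookkeeping in this rigidity step, together with justifying that $w$ does not vanish on $\partial\Omega$, to be the delicate part; the inequality itself is a short Reilly-formula computation.
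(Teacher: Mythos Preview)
Your inequality argument is correct but follows a genuinely different route from the paper. The paper works pointwise: it sets $w=|\nabla u|^2-u\,\Delta u$, shows via Bochner and the hypotheses that $\Delta w\ge 0$, and then applies the Hopf boundary lemma at a maximum point $p\in\partial\Omega$, computing $\partial w/\partial\nu(p)=(p_1-2k_g)\,(\partial u/\partial\nu)^2>0$. You instead run the integral Reilly identity (this is essentially the Wang--Xia argument cited in the paper): from $u=0$ on $\partial\Omega$ one has $\int_\Omega(\Delta u)^2=\int_\Omega|\mathrm{Hess}\,u|^2+\int_\Omega K|\nabla u|^2+\int_{\partial\Omega}k_g\,u_\nu^2$, then $|\mathrm{Hess}\,u|^2\ge\tfrac12(\Delta u)^2$ and $K\ge 0$, $k_g\ge c$ give $\int_\Omega(\Delta u)^2\ge 2c\int_{\partial\Omega}u_\nu^2$; combining with $p_1\int_{\partial\Omega}u_\nu^2\ge\int_\Omega(\Delta u)^2$ (using $t\varphi(t)\le 0$) yields $p_1\ge 2c$. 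Your route is shorter and more global; the paper's route is pointwise and avoids quoting Reilly. Two small corrections in your write-up: your auxiliary $w=-\Delta u$ plays no role once you switch to Reilly, and the boundary computation $\mathrm{Hess}\,u(e_2,e_2)=-(p_1+k_g)\,u_\nu$ has a sign slip; it should be $(p_1-k_g)\,u_\nu$.

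The equality discussion is where your proposal is incomplete. You correctly extract $u\varphi(u)\equiv 0$, $K|\nabla u|^2\equiv 0$, $k_g\equiv c$ on $\{u_\nu\neq 0\}$, and $\mathrm{Hess}\,u=\tfrac12(\Delta u)\,g$, but the closing sign computation is garbled and you stop short of the rigidity. To finish along your lines: from $t\varphi(t)\le 0$ and continuity one has $\varphi(0)=0$, hence $\varphi(u)\equiv 0$; taking the divergence of $\mathrm{Hess}\,u=\tfrac12(\Delta u)\,g$ and using the Ricci identity $\operatorname{div}(\mathrm{Hess}\,u)=\nabla(\Delta u)+K\nabla u$ together with $K\nabla u\equiv 0$ gives $\nabla(\Delta u)=0$, so $\Delta u$ is a nonzero constant (else $u\equiv 0$); this forces $\{\nabla u=0\}$ to have empty interior, hence $K\equiv 0$ by density, and then $u$ solves the overdetermined problem $\Delta u=\text{const}$, $u=0$, $u_\nu=\text{const}$ on $\partial\Omega$, so Serrin's theorem gives the Euclidean disk of radius $1/c$. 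The paper's Case~II reaches the same overdetermined problem (and invokes Serrin) from the constancy of its $w$.
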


\begin{proof}
Define the auxiliary function $w$ by
	\[
	w := |\nabla u|^2 - u \Delta u.
	\]
The Bochner formula with our assumption shows that
\begin{align} \label{ineq: Bochner 2}
\frac{1}{2} \Delta w
&= \frac{1}{2} \Delta |\nabla u|^2 - \frac{1}{2} \Delta(u\Delta u) \nonumber \\
&= |\mathrm{Hess} \, u|^2 + \langle \nabla u, \nabla (\Delta U) \rangle + K|\nabla u|^2 \nonumber \\
&\quad - \frac{1}{2} \left[ u \Delta^2 u + (\Delta u)^2 + 2 \langle \nabla u, \nabla (\Delta u) \rangle \right] \nonumber \\
&= |\mathrm{Hess} \, u|^2 - \frac{1}{2} (\Delta u)^2 + K|\nabla u|^2 - \frac{1}{2} u \varphi(u)\nonumber \\
&\ge 0,
\end{align}
where we used the Schwarz inequality $|\mathrm{Hess}\, u|^2 \ge \frac{1}{2} (\Delta u)^2$ in dimension $2$. Thus it follows that $w$ is subharmonic in $\Omega$. By the strong maximum principle, there are two cases:\\

\noindent {\bf Case I:} The maximum value of $w$ on $\overline{\Omega}$ occurs only on the boundary $\partial \Omega$.\\
\noindent {\bf Case II:} The function $w$ is constant in $\Omega$.\\

For Case I, suppose that the maximum value of $w$ is attained at some point $p\in\partial\Omega$. As before, we choose a local orthonormal frame $\{e_1,e_2\}$ near $p$ satisfying that $e_1$ is tangent to $\partial\Omega$ and $e_2=\nu$ with
\[
\nabla_{e_1}e_1 \;=\; -\,k_g\,e_2,
\]
where $k_g$ denotes the geodesic curvature of $\partial\Omega$. At $p\in \partial \Omega$,
\begin{align} \label{eq: normal derivative of w}
\frac{\partial w}{\partial \nu}(p) &= e_2 w = e_2 \langle \nabla u, \nabla u \rangle - e_2(u \Delta u) \nonumber \\
&= 2 \operatorname{Hess} u(\nabla u, e_2) - \langle \nabla u, e_2 \rangle \Delta u.
\end{align}
Since $u$ vanishes along $\partial \Omega$, we have $\langle \nabla u, e_1 \rangle = 0$ on $\partial \Omega$. Thus
\begin{align} \label{eq: Hess u e1e1}
\mathrm{Hess}\,u(e_1,e_1) & = e_1 e_1 u - (\nabla_{e_1}e_1)u \nonumber \\
&= e_1 \langle \nabla u, e_1 \rangle + k_g \langle \nabla u, e_2 \rangle \nonumber \\
&= k_g \langle \nabla u, e_2 \rangle
\end{align}
at $p\in \partial \Omega$. Combining (\ref{eq: normal derivative of w}),  (\ref{eq: Hess u e1e1}) and the boundary condition gives
\begin{align}  \label{eq: sss}
\frac{\partial w}{\partial \nu}(p) &= 2 \langle \nabla u, e_2 \rangle \operatorname{Hess} u(e_2, e_2) - p_1 \langle \nabla u, e_2 \rangle^2
\end{align}
Observe that, by (\ref{eq: Hess u e1e1}),
\begin{align} \label{eq: ggg}
\mathrm{Hess}\,u(e_2,e_2) & = \Delta u - \mathrm{Hess}\,u(e_1,e_1)  \nonumber \\
&= (p_1-k_g) \langle \nabla u, e_2 \rangle
\end{align}
at $p\in \partial \Omega$. Thus, from (\ref{eq: sss}) and (\ref{eq: ggg}), it follows that
\begin{align*}
\frac{\partial w}{\partial \nu}(p) &=  \langle \nabla u, e_2 \rangle^2 (p_1 - 2 k_g).
\end{align*}
Since $\frac{\partial w}{\partial \nu}(p)>0$ by the Hopf boundary point lemma and $\langle \nabla u, e_2 \rangle \neq 0$, we obtain
$$p_1 > 2 k_g,$$
which shows that $p_1 > 2 c.$

For Case II, suppose that $w\equiv constant$ in $\Omega$. By (\ref{ineq: Bochner 2}), we see that
$$K\equiv 0~ {\mbox{ and }}~ \varphi \equiv 0 ~{\mbox{ in }} \Omega.$$
Note that $u = 0$ on $\partial \Omega$. From this, it follows that
\begin{align*}
w= \left(\frac{\partial u}{\partial \nu} \right)^2 = {\mbox{constant}}~{\mbox{ on }} \partial \Omega,
\end{align*}
which implies that
\begin{align*}
\Delta u= p_1 \frac{\partial u}{\partial \nu} = {\mbox{constant}}~{\mbox{ on }} \partial \Omega.
\end{align*}
Since $\Delta u$ is harmonic in $\Omega$, we have the following overdetermined boundary value problem:

\begin{align*}
\begin{cases}
	\Delta u = p_1c_1 & \text{in } \Omega, \\
	u = 0 & \text{on } \partial \Omega, \\
	\frac{\partial u}{\partial \nu} = c_1   & \text{on } \partial \Omega
\end{cases}
\end{align*}
for some constant $c_1$. By the well-known result due to Serrin \cite{Serrin}, $\Omega$ is a Euclidean disk of radius $r$. One can check that
$$p_1 = \frac{2}{r} =2k_g \ge 2c,$$
which completes the proof.
\end{proof}

\vskip 0.3cm
\noindent
\noindent{\bf Acknowledgment: } The second author was supported by the National Research Foundation of Korea (RS-2021-NR058050). Part of this work was carried out while the first author was a postdoctoral research fellow and the second author was visiting at the University of California, Santa Barbara, in 2024. They wish to express their gratitude to Professor Guofang Wei for her warm hospitality during their stay.

%%%%%%%%%%%%%%%%%%%%
%%%%%%%%%%%%%%%%%%%%
%%%%%%%%%%%%%%%%%%%%
%\addcontentsline{toc}{section}{3 \hspace{0.09cm} References}

\vskip 1cm
\noindent Gunhee Cho\\
Department of Mathematics\\
Texas State University\\
601 University Drive, San Marcos, TX 78666.\\
{\tt e-mail:wvx17@txstate.edu}\\
URL:https://sites.google.com/view/enjoyingmath/

\bigskip
\noindent Keomkyo Seo\\
Department of Mathematics and Research Institute of Natural Sciences\\
Sookmyung Women's University\\
Cheongpa-ro 47-gil 100, Yongsan-ku, Seoul, 04310, Korea \\
{\tt E-mail:kseo@sookmyung.ac.kr}\\
URL: http://sites.google.com/site/keomkyo/


\begin{thebibliography}{99}
	
	\bibitem{AM} G. Alessandrini, R. Magnanini, {\em Symmetry and nonsymmetry for the overdetermined Stekloff eigenvalue problem}, Z. Angew. Math. Phys. {\bf 45} (1994), no. 1, 44--52.
	
	\bibitem{Auchmuty} G. Auchmuty, {\em Steklov eigenproblems and the representation of solutions of elliptic boundary value problems}, Numer. Funct. Anal. Optim. {\bf 25} (2004), no. 3--4, 321--348.
	
	\bibitem{Batista2023} R. Batista, B. Lima, P. Sousa, B. Vieira, {\em Estimate for the first fourth Steklov eigenvalue of a minimal hypersurface with free boundary}, Pacific J. Math. {\bf 325} (2023), no. 1, 1--17.
	
	\bibitem{BGM} E. Berchio, F. Gazzola, E. Mitidieri, {\em Positivity preserving property for a class of biharmonic elliptic problems}, J. Differential Equations {\bf 229} (2006), no. 1, 1--23.
	
	\bibitem{BFG} D. Bucur, A. Ferrero, F. Gazzola, {\em On the first eigenvalue of a fourth order Steklov problem}, Calc. Var. Partial Differential Equations {\bf 35} (2009), no. 1, 103--131.
	
	\bibitem{CGGS} B. Colbois, A. Girouard, C. Gordon, D. Sher, {\em Some recent developments on the Steklov eigenvalue problem}, Rev. Mat. Complut. (2023). \url{https://doi.org/10.1007/s13163-023-00480-3}
	
	\bibitem{DK} J. A. J. Duncan, A. Kumar, {\em The first Steklov eigenvalue on manifolds with non-negative Ricci curvature and convex boundary}, J. Geom. Anal. {\bf 35} (2025), no. 3, Paper No. 95, 18~pp.
	
	\bibitem{Escobar1997} J. F. Escobar, {\em The geometry of the first non-zero Stekloff eigenvalue}, J. Funct. Anal. {\bf 150} (1997), no. 2, 544--556.
	
	\bibitem{Escobar1999} J. F. Escobar, {\em An isoperimetric inequality and the first Steklov eigenvalue}, J. Funct. Anal. {\bf 165} (1999), no. 1, 101--116.
	
	\bibitem{Escobar2000} J. F. Escobar, {\em A comparison theorem for the first non-zero Steklov eigenvalue}, J. Funct. Anal. {\bf 178} (2000), no. 1, 143--155.
	
	\bibitem{FGW} A. Ferrero, F. Gazzola, T. Weth, {\em On a fourth order Steklov eigenvalue problem}, Analysis (Munich) {\bf 25} (2005), no. 4, 315--332.
	
	\bibitem{FS2011} A. Fraser, R. Schoen, {\em The first Steklov eigenvalue, conformal geometry, and minimal surfaces}, Adv. Math. {\bf 226} (2011), no. 5, 4011--4030.
	
	\bibitem{FS2016} A. Fraser, R. Schoen, {\em Sharp eigenvalue bounds and minimal surfaces in the ball}, Invent. Math. {\bf 203} (2016), no. 3, 823--890.
	
	\bibitem{FS2020} A. Fraser, R. Schoen, {\em Some results on higher eigenvalue optimization}, Calc. Var. Partial Differential Equations {\bf 59} (2020), no. 5, Paper No. 151, 22~pp.
	
	\bibitem{GP} A. Girouard, I. Polterovich, {\em Spectral geometry of the Steklov problem}, J. Spectr. Theory {\bf 7} (2017), no. 2, 321--359.
	
	\bibitem{KS} J. R. Kuttler, V. G. Sigillito, {\em Inequalities for membrane and Stekloff eigenvalues}, J. Math. Anal. Appl. {\bf 23} (1968), 148--160.
	
	\bibitem{LS} E. Lee, K. Seo, {\em An overdetermined Steklov eigenvalue problem on Riemannian manifolds with nonnegative Ricci curvature}, Results Math. {\bf 80} (2025), no. 4, Paper No. 102.
	
	\bibitem{Mavinga} N. M. Mavinga, {\em Steklov spectrum and elliptic problems with nonlinear boundary conditions}, Notices Amer. Math. Soc. {\bf 70} (2023), no. 2, 214--222.
	
	\bibitem{Payne1970} L. E. Payne, {\em Some isoperimetric inequalities for harmonic functions}, SIAM J. Math. Anal. {\bf 1} (1970), no. 3, 354--359.
	
	\bibitem{PP} L. E. Payne, G. A. Philippin, {\em Some overdetermined boundary value problems for harmonic functions}, Z. Angew. Math. Phys. {\bf 42} (1991), no. 6, 864--873.
	
	\bibitem{RS} S. Raulot, A. Savo, {\em Sharp bounds for the first eigenvalue of a fourth-order Steklov problem}, J. Geom. Anal. {\bf 25} (2015), no. 3, 1602--1619.
	
	\bibitem{Serrin} J. Serrin, {\em A symmetry problem in potential theory}, Arch. Ration. Mech. Anal. {\bf 43} (1971), no. 4, 304--318.
	
	\bibitem{WX} Q. Wang, C. Xia, {\em Sharp bounds for the first non-zero Stekloff eigenvalues}, J. Funct. Anal. {\bf 257} (2009), no. 8, 2635--2644.
	
	\bibitem{Weinstock} R. Weinstock, {\em Inequalities for a classical eigenvalue problem}, J. Rational Mech. Anal. {\bf 3} (1954), 745--753.
	
	\bibitem{XX} C. Xia, C. Xiong, {\em Escobar's conjecture on a sharp lower bound for the first nonzero Steklov eigenvalue}, Peking Math. J. {\bf 7} (2024), no. 2, 759--778.
	
\end{thebibliography}
\end{document}